\theoremstyle{plain}
  \newtheorem{thm}{Theorem}[section]
  \newtheorem{lem}[thm]{Lemma}
  \newtheorem{prop}[thm]{Proposition}
  \newtheorem{stmt}[thm]{Statement}
\theoremstyle{definition}
  \newtheorem{ex}[thm]{Example}
\theoremstyle{remark}
  \newtheorem{rem}[thm]{Remark}
  \newtheorem*{ack}{Acknowledgments}
\newcommand{\Z}{\mathbb{Z}}
\newcommand{\Znonneg}{\mathbb{Z}_{\geq0}}
\newcommand{\R}{\mathbb{R}}
\newcommand{\basedMatrices}[1]{\mathcal{B}_#1}
\newcommand{\skewSymMatrices}[1]{\mathcal{S}_#1}
\newcommand{\cover}[2]{#1^{(#2)}}
\newcommand{\nanoword}[2]{#1\colon\hspace{-1mm}#2}
\newcommand{\link}[2]{l(#1,#2)}
\newcommand{\trivial}{0}
\newcommand{\xType}[1]{\lvert#1\rvert}
\newcommand{\textcover}[1]{$#1$-covering}
\newcommand{\factorial}[1]{#1!}
\newcommand{\vstring}[2]{$#1_{#2}$}
\newcommand{\surfdiag}{virtual string surface diagram}
\DeclareMathOperator{\n}{n}
\DeclareMathOperator{\swap}{swap}
\DeclareMathOperator{\flatten}{flat}
\numberwithin{equation}{section}
\begin{document}
\title{On Tabulating Virtual Strings}
\author{Andrew Gibson}
\address{
Department of Mathematics,
Tokyo Institute of Technology,
Oh-okayama, Meguro, Tokyo 152-8551, Japan
}
\email{gibson@math.titech.ac.jp}
\date{\today}
\begin{abstract}
A virtual string can be defined as a closed curve on a surface modulo
 certain equivalence relations. 
Turaev defined several invariants of virtual strings which we use to
 produce a table of virtual strings up to 4 crossings.
We discuss progress in extending the tabulation to 5 crossings.
We also provide a counter-example to a statement of Kadokami.
\end{abstract}
\keywords{virtual strings, virtual knots}
\subjclass[2000]{Primary 57M25; Secondary 57M99}
\thanks{This work was supported by a Scholarship from the Ministry of
Education, Culture, Sports, Science and Techonology of Japan}
\maketitle
\section{Introduction}
In \cite{Kauffman:VirtualKnotTheory} Kauffman introduced the idea of a
virtual knot by use of diagrams on the plane. A virtual knot diagram is 
an immersion of a circle in $\R^2$ with a finite
number of self-intersections of the circle. Each self-intersection is a
transverse double point which we call a crossing.
There are two types of crossings. A real crossing is a crossing
where one arc is specified to be over-crossing and the other
under-crossing. A virtual crossing is a crossing where no over or under
crossing information is specified.
Each virtual crossing in a diagram is marked with a small circle.
Figure~\ref{fig:kishino} shows an example of such a diagram.
Virtual knots can then be
defined to be equivalence classes of these diagrams under a relation
given by a set of diagrammatic moves based on the Reidemeister
moves.
\par
\begin{figure}[hbt]
\begin{center}
\includegraphics{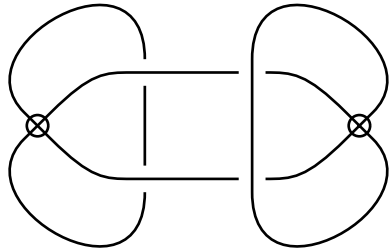}
\caption{A virtual knot diagram of a virtual knot known as Kishino's knot}
\label{fig:kishino}
\end{center}
\end{figure}
Carter, Kamada and Saito showed that we can consider virtual knots as
equivalence classes of knot diagrams on compact oriented surfaces
\cite{CKS:StableEquivalence}. Here the knot diagrams only use real
crossings. Knot diagrams are considered equivalent if they are related
by a finite sequence of Reidemeister moves or stable
homeomorphisms (which allow parts of the surface away from the knot to
be changed). 
\par
If we take a knot diagram on a surface and flatten the crossings to
double points we get a curve on the surface. We can flatten the
Reidemeister moves in the same way. We can then define an equivalence
relation on the flattened diagrams using these flattened Reidemeister
moves and stable homeomorphisms. We define a virtual string to be
an equivalence class under this relation. Definitions of the moves
and of stable homeomorphism will be given in the next section.
Elsewhere in the literature virtual strings are also known as projected
virtual knots (for example \cite{Kadokami:Non-triviality}), flat
knots or flat virtual knots (for example
\cite{Hrencecin/Kauffman:Filamentations}),
or universes of virtual knots (for example
\cite{CKS:StableEquivalence}).
\par
It turns out that this `flattening' operation on knot diagrams on
surfaces induces a well-defined map from the set of virtual knots to the
set of virtual strings. In other words, if $D_1$ and $D_2$ are
knot diagrams on surfaces in the same virtual knot equivalence class,
the flattened versions $\flatten(D_1)$ and $\flatten(D_2)$ are in the
same virtual string 
equivalence class. Thus the virtual string underlying a virtual knot is
an invariant of that knot and invariants of virtual strings may be used
to distinguish virtual knots.
\par
For a classical knot $K$ it is always the case that the underlying
virtual string is trivial. On the other hand, consider the knot in
Figure~\ref{fig:kishino}, which is known as Kishino's knot. It is known
that 
this virtual knot is indistinguishable from the trivial virtual knot
using certain virtual knot invariants like the Jones polynomial and the
fundamental group. Kishino showed that it was non-trivial by calculating
the Jones polynomial of the $3$-parallel of the knot
\cite{Kishino/Shin:VirtualKnots}.
Another way to show its non-triviality is to show that its underlying
virtual string is non-trivial. Kadokami suggested this approach in
\cite{Kadokami:Non-triviality}. Kadokami's proof of non-triviality of
the underlying virtual string is based on another theorem in that paper
for which we found a problem with the proof (we discuss this problem in
Section~\ref{sec:kadokami}). Non-triviality of the virtual string can be
shown using Turaev's primitive based matrix invariant. Turaev gave the
calculation of this invariant for this virtual string in
\cite{Turaev:2004} although the connection with Kishino's knot is not
noted there.
\par
The rest of the paper is organised as follows.
\par
In the next section we give a full definition of virtual strings.
In Section~\ref{sec:nanowords} we explain another representation of
virtual strings using Turaev's nanowords. This representation is useful
for giving tables of virtual strings.
In Section~\ref{sec:invariants} we briefly explain some invariants of
virtual strings defined by Turaev in \cite{Turaev:2004}.
\par
In Sections~\ref{sec:canonical} and~\ref{sec:enumeration} we give some
details of the algorithms we used for canonicalizing the representation
of primitive based matrices and for the enumeration of virtual
strings. 
The main results of this paper appear in Section~\ref{sec:results}
where the results of our enumeration are given. 
In the final section we discuss a statement in a paper by
Kadokami \cite{Kadokami:Non-triviality} and give a counter-example to
it. 
\par
The contents of this paper form part of the author's thesis submitted to
fulfil a requirement of the Master's course at Tokyo Institute of
Techonology.
\begin{ack}
The author would like to thank the organizers of the International
 Conference on Quantum Topology 6-12th August 2007 for organizing an
 interesting conference. He is also grateful to the members of the Institute
 of Mathematics, Hanoi for their hospitality during the conference. The
 author also thanks his supervisor Hitoshi Murakami and Yuji Terashima
 for their help.
\end{ack}
\section{Virtual strings}
A \emph{\surfdiag} is a pair $(S,D)$, where $S$ is
a compact oriented surface and $D$ is an immersion of an oriented circle
in $S$. 
Self-intersections of the immersion should be transverse double points.
We call these self-intersections \emph{crossings}.
In this paper, we only consider the case where the number of
crossings is finite. Figure~\ref{fig:diagram31} shows an example of such
a pair. Here we consider the outer region to be a disc, and so the surface
$S$ is compact and has genus $2$.
\begin{figure}[hbt]
\begin{center}
\includegraphics{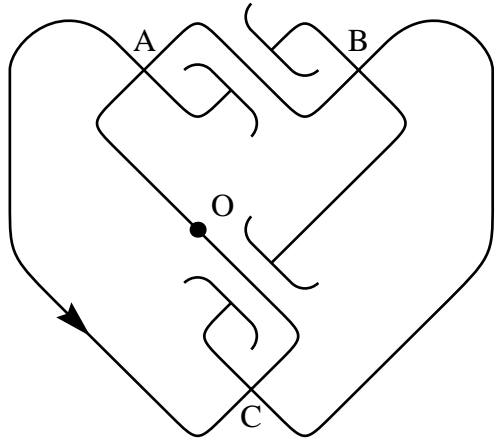}
\caption{A non-trivial virtual string with orientation marked by an
 arrow. The point marked O and the crossing labels A, B and C will be
 used in Section~\ref{sec:nanowords}.}
\label{fig:diagram31}
\end{center}
\end{figure}
\par
We say that two \surfdiag s $(S,D)$ and $(S^\prime, D^\prime)$ are
\emph{stably homeomorphic} if there is a homeomorphism mapping a regular
neighbourhood of $D$ in $S$ to a regular neighbourhood of $D^\prime$ in
$S^\prime$ preserving the orientations of the circle and the
surface. Note that if we have two immersions $D$ and $D^\prime$ on the
same surface $S$ such that $D$ is isotopic to $D^\prime$ as a graph,
$(S,D)$ and $(S, D^\prime)$ are stably homeomorphic. In this way we can
consider isotopy of immersions as a special case of stable
homeomorphism.
\par
\begin{figure}[hbt]
\begin{center}
\includegraphics{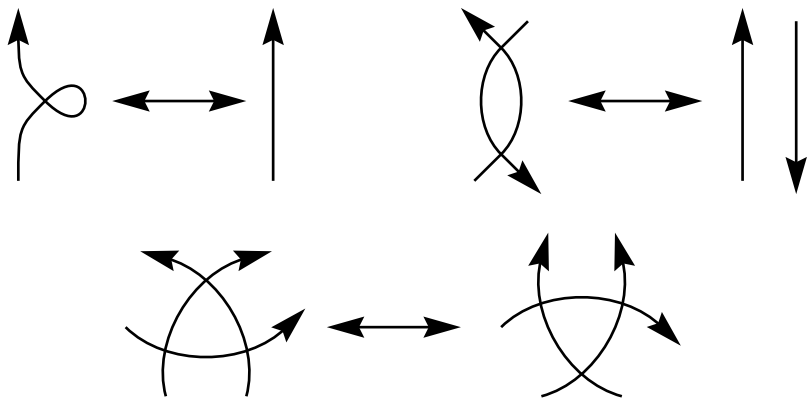}
\caption{The flattened Reidemeister moves}
\label{fig:reidermeister}
\end{center}
\end{figure}
Figure~\ref{fig:reidermeister} gives some moves between two \surfdiag s
$(S,D)$ and $(S, D^\prime)$. Each side of each move shows a small area
of $S$ homeomorphic to a disc. These moves are called flattened
Reidemeister moves because they are derived from the usual Reidemeister
moves of knot theory (see, for example, \cite{Lickorish:1997} for
definitions) by flattening each crossing to a double point. We sometimes
call the flattened Reidemeister moves \emph{homotopy moves}.
\par
We say two \surfdiag s $(S,D)$ and $(S^\prime, D^\prime)$ are
\emph{homotopic} if there exists a finite sequence of stable homeomorphisms
and flattened Reidemeister moves transforming one pair to the
other. Clearly this relation is an equivalence relation and we call it
\emph{homotopy}.
This equivalence relation is also known as \emph{stable equivalence}
\cite{Kadokami:Non-triviality}.
We define a \emph{virtual string} to be an equivalence class of this
relation.
\par
For any move shown in Figure~\ref{fig:reidermeister} we can consider
variants of the move by swapping the orientation of one or more arcs
(and swapping the orientations of the corresponding arcs on the other
side of the move). It is well known that we can derive
all such variants from the moves shown in
Figure~\ref{fig:reidermeister}. We showed how we can do this in 
\cite{Gibson:mthesis}.
This gives us a larger set of moves. Any move in this set which adds or
removes a single crossing is called a $1$-move. Any move in this set
which adds or removes two crossings is called a $2$-move. All remaining
moves in the set involve three crossings and we call them $3$-moves.
\par
Virtual strings can be represented as planar diagrams which we call
\emph{virtual string diagrams}.
Given a \surfdiag{}
$(S,D)$ we can project $D$ to a plane in such a way that any
self-intersections of the image of $D$ are transverse double points and
there are a finite number of them. Any crossings in the image that
correspond to crossings on $S$ are called real crossings. Any other
crossings in the image are called virtual crossings. We mark a virtual
crossing with a small circle (see Figure~\ref{fig:virtualcrossings}). We
can convert a planar diagram back to a \surfdiag{} by replacing any
virtual crossings with handles. Figure~\ref{fig:handle} shows this local
transformation.
\begin{figure}[hbt]
\begin{center}
\includegraphics{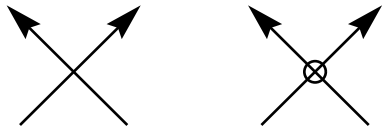}
\caption{A real crossing (left) and a virtual crossing (right)}
\label{fig:virtualcrossings}
\end{center}
\end{figure}
\begin{figure}[hbt]
\begin{center}
\includegraphics{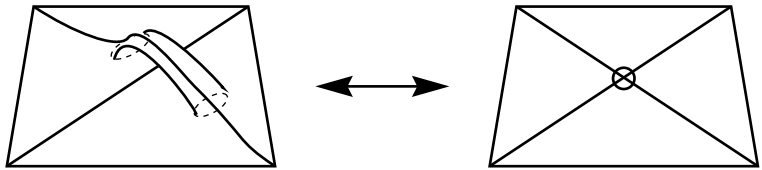}
\caption{Changing surface with a hollow handle (left) to a
planar diagram with a virtual crossing (right)}
\label{fig:handle}
\end{center}
\end{figure}
\par
We can think of a virtual string diagram as a virtual knot diagram
\cite{Kauffman:VirtualKnotTheory} where the real crossings have been
`flattened' to double points, removing the over and under crossing 
information. In the same way we can flatten the generalized Reidemeister
moves of virtual knot theory \cite{Kauffman:VirtualKnotTheory} to get
moves for virtual string diagrams (see, for example,
\cite{Kadokami:Non-triviality}).
We can define a relation on virtual string diagrams by saying that two
virtual strings are related if there is a finite sequence of these
flattened moves tranforming one diagram to the other. This relation is
an equivalence relation.
Following Carter, Kamada and Saito \cite{CKS:StableEquivalence},
Kadokami showed that the set of virtual strings is equivalent to the set
of equivalence classes of virtual string diagrams under this equivalence
relation \cite{Kadokami:Non-triviality}.
This equivalence relation is also called \emph{homotopy}.
\par
The \emph{crossing number} of a \surfdiag{} is the number of crossings
appearing 
in it. We define the \emph{minimal crossing number} of a \surfdiag{}
$(S,D)$ to be the minimum crossing number of all \surfdiag s that are
homotopic to $(S,D)$. Clearly this is a virtual string
invariant.
\section{Nanowords}\label{sec:nanowords}
Turaev defined the concept of a nanoword in
\cite{Turaev:Words}. He showed how nanowords can be used to represent virtual
strings in \cite{Turaev:KnotsAndWords}. In general we can use nanowords
to represent other kinds of objects such as virtual knots but in this
paper we will always use the term nanoword to mean a nanoword
representing a virtual string.
\par
A \emph{nanoword} is a pair $(w,\pi)$ where $w$ is a Gauss word and $\pi$ is a
map from the letters appearing in $w$ to the set $\{a,b\}$. Recall that
a Gauss word is a finite sequence of letters where each letter that
appears, appears exactly twice. We follow Turaev and write $\xType{X}$
for $\pi(X)$ for a letter $X$ appearing in the Gauss word $w$.
\par
Given a \surfdiag{} we construct a nanoword to represent
it. We first pick some non-double point as a base
point and label all the crossings. Starting at the base point we
follow the curve according to its orientation. We read off the label of
each crossing as we pass through it, stopping when we get back to
the base point for the first time. The result is a Gauss word.
For example, the Gauss word associated with the diagram
in Figure~\ref{fig:diagram31} with base point $O$ is $ABCBAC$.
\par
\begin{figure}[hbt]
\begin{center}
\includegraphics{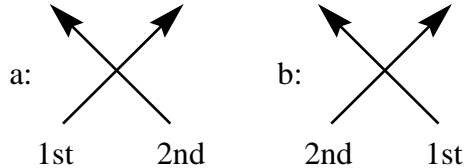}
\caption{The two types of crossing}
\label{fig:crossing}
\end{center}
\end{figure}
Because we have a base point, we can classify crossings into types
depending on the direction which the second arc going through the 
crossing crosses the first. This is shown in Figure~\ref{fig:crossing}.
In the diagram in Figure~\ref{fig:diagram31} the types of $A$ and $B$
are $a$ and the type of $C$ is $b$. This defines a map $\pi$ and we
write $\xType{A}=\xType{B}=a$ and $\xType{C}=b$.
\par
The Gauss word and the map together give a nanoword representing the
diagram. We can write the nanoword compactly as $\nanoword{ABCBAC}{aab}$ where
the types are listed in alphabetical order of the letters in the Gauss
word. In other words
$\nanoword{ABCBAC}{aab} = \nanoword{ABCBAC}{\xType{A}\xType{B}\xType{C}}$.
Note that a \surfdiag{} with no double points has an empty Gauss word
and the map $\pi$ is a map from the empty set to $\{a,b\}$. We write
this nanoword compactly as $\trivial$.
\par
An isomorphism between two nanowords $(w_1,\pi_1)$ and $(w_2,\pi_2)$
is a bijection $i$ between the sets of letters appearing in $w_1$ and $w_2$
such that $i$ maps the $n$th letter of $w_1$ to the $n$th letter of
$w_2$ for all $n$ and for each letter $X$ in $w_1$,
$\pi_1(X)$ is equal to $\pi_2(X)$.
Informally, an isomorphism is just a relabelling of the crossings.
\par
Turaev defined the following moves for nanowords.
In these move descriptions, the upper case
letters $A$, $B$ and $C$ represent arbitrary individual letters.
The lower case
letters $x$, $y$, $z$ and $t$ represent arbitrary sequences of letters
such that both sides of each move are Gauss words.
\par
Shift move:
\begin{equation*}
AxAy \longleftrightarrow xByB
\end{equation*}
where $A$ and $B$ map to opposite types.
\par
Homotopy move 1 (H1):
\begin{equation*}
xAAy \longleftrightarrow xy
\end{equation*}
where $A$ maps to $a$ or $b$.
\par
Homotopy move 2 (H2):
\begin{equation*}
xAByBAz \longleftrightarrow xyz
\end{equation*}
where $A$ and $B$ map to opposite types.
\par
Homotopy move 3 (H3):
\begin{equation*}
xAByACzBCt \longleftrightarrow xBAyCAzCBt
\end{equation*}
where $A$, $B$ and $C$ all map to the same type.
\par
Turaev derived some other moves from the moves H1, H2 and H3. 
We quote the moves here. The proofs appear in Lemmas 3.2.1 and 3.2.2 in
\cite{Turaev:Words}. 
\par
Homotopy move 3a (H3a)
\begin{equation*}
xAByCAzBCt \longleftrightarrow xBAyACzCBt
\end{equation*}
where $A$ and $C$ map to the same type and $B$ maps to the opposite
type.
\par
Homotopy move 3b (H3b)
\begin{equation*}
xAByCAzCBt \longleftrightarrow xBAyACzBCt
\end{equation*}
where $A$ and $B$ map to the same type and $C$ maps to the opposite
type.
\par
Homotopy move 3c (H3c)
\begin{equation*}
xAByACzCBt \longleftrightarrow xBAyCAzBCt
\end{equation*}
where $B$ and $C$ map to the same type and $A$ maps to the opposite
type.
\par
Homotopy move 2a (H2a)
\begin{equation*}
xAByABz \longleftrightarrow xyz
\end{equation*}
where $A$ and $B$ map to opposite types.
\begin{rem}
The set of homotopy moves on nanowords corresponds to the flat
Reidemeister moves and all possible variants of those moves derived by
changing orientations of arcs. 
See \cite{Gibson:mthesis} for a detailed description of the
correspondence between these sets of moves.
\end{rem}
As for moves on \surfdiag s, we collectively refer to any
of the moves given above involving $1$, $2$ or $3$ letters as
$1$-moves, $2$-moves or $3$-moves respectively. 
\par
Two nanowords are said to be homotopic if there exists a finite
sequence of the moves H1, H2 and H3, shift moves and
isotopies starting at one nanoword and finishing at the other.
This defines an equivalence relation called homotopy. Turaev proved that
there is a bijection between the equivalence classes of the set of
nanowords under this relation and the set of virtual strings
\cite{Turaev:KnotsAndWords}. In particular this means that homotopy
invariants of nanowords are invariants of virtual strings.
\section{Invariants}\label{sec:invariants}
In this section we briefly describe three invariants of virtual strings
defined by Turaev in \cite{Turaev:2004}, the $u$-polynomial, primitive
based matrices and coverings.
\par
In a given nanoword $\alpha$ we define the linking number of two distinct letters
$A$ and $B$ as follows. If $A$ and $B$ alternate in $\alpha$ (that is
$\alpha$ has the form $uAvBxAyBz$ or $uBvAxByAz$) then $A$ and $B$ are
said to be linked. In any other cases $A$ and $B$ are said to be
unlinked. When $A$ and $B$ are linked, we can use the shift move
to transform the nanoword $\alpha$ into the form $uAvBxAyBz$, where
$\xType{A}$ is $a$. Then, if $\xType{B}$ is $a$ we say that $B$ links
$A$ positively and if $\xType{B}$ is $b$ we say that $B$ links
$A$ negatively.
We define $\link{A}{B}$ as $0$ if $A$ and $B$ are unlinked,
$1$ if $B$ links $A$ positively and $-1$ if $B$ links $A$ negatively.
We define $\link{X}{X}$ to be $0$ for all letters $X$ in $\alpha$.
\par
Now for a letter $X$ in $\alpha$ we define
\begin{equation*}
\n(X) = \sum_{Y \in \alpha}\link{X}{Y}.
\end{equation*}
Next, for $k$ an integer greater than or equal to one, we define
\begin{equation*}
u_k(\alpha) = \# \lbrace X\in \alpha | \n(X)=k \rbrace - \# \lbrace X\in \alpha | \n(X)=-k \rbrace
\end{equation*}
where $\#$ indicates the number of elements in the set.
We then define the $u$-polynomial of $\alpha$ as
\begin{equation*}
u_{\alpha}(t) = \sum_{k \geq 1}u_k(\alpha)t^k.
\end{equation*}
Turaev showed that the $u$-polynomial is a homotopy invariant of $\alpha$.
\par
The second invariant we consider is the primitive based matrix
invariant.
\par
Given a virtual string $\Gamma$, we pick a \surfdiag{} $(S,D)$ which
represents it. We label the crossings appearing in $(S,D)$
and define $G$ to be the set of crossing labels union a special
element $s$. For each element $g$ in $G$ we define a curve which we
denote $g_c$.
\par
We define $s_c$ to be the whole curve in $(S,D)$. Any other element $g$ in
$G$ corresponds to a crossing $X_g$ in $(S,D)$. We can always orient the
crossing so that it appears as in the left of
Figure~\ref{fig:virtualcrossings}. We then define $g_c$ to be a curve on
$S$ 
parallel to the curve starting at $X_g$, leaving on the right hand outgoing
arc, and returning to $X_g$ on the right hand incoming arc. 
\par
We define a map $b$ from $G\times G$ to $\Z$ by using the curves we have
defined. We define $b(g,h)$ to be equal to the number of real crossings for which
$h_c$ crosses $g_c$ from right to left minus the number of real
crossings for which $h_c$ crosses $g_c$ from left to right. 
This is just the homological intersection number of $g_c$ with $h_c$. 
By the anti-symmetry in the definition it
follows that $b$ is skew-symmetric, that is $b(g,h) = -b(h,g)$ for all
$g$ and $h$ in $G$. The \emph{based matrix} of $(S,D)$ is defined to be
the triple $(G,s,b)$.
\par
Two based matrices $(G_1,s_1,b_1)$ and $(G_2,s_2,b_2)$ are
said to be \emph{isomorphic} if there is a bijective map $f$ from $G_1$
to $G_2$ which maps $s_1$ to $s_2$ and for which $b_1(g,h)$ equals
$b_2(f(g),f(h))$ for all $g$ and $h$ in $G_1$. 
\par
Turaev defined some moves on based matrices which allow us to derive
smaller based matrices by removing one or two elements under specific
conditions. A based matrix for which no moves are available is called
\emph{primitive}. By applying moves to the based matrix of
\surfdiag{} $(S,D)$, we can derive a primitive based matrix. Turaev proved
that, up to isomorphism, this primitive based matrix is a homotopy
invariant of the virtual string $\Gamma$ which $(S,D)$ represents
\cite{Turaev:2004}.
\par
Any invariant of the primitive based matrix of a virtual string $\Gamma$
is thus an invariant of $\Gamma$.
The simplest example of such an invariant is the size of the primitive
based matrix. None of the moves on based matrices allows us to remove
the special element $s$, and so all based matrices have size greater than or
equal to one. We define $\rho(\Gamma)$ to be the size of the primitive
based matrix of $\Gamma$ minus $1$. In \cite{Turaev:2004}, Turaev
suggested some other invariants of primitive based matrices. 
\par
Turaev showed that the primitive based matrix of a virtual string
determines the $u$-polynomial of the virtual string. In fact, the
primitive based matrix invariant is stronger than the
$u$-polynomial. Turaev showed this in \cite{Turaev:2004} and it can also
be seen in Section~\ref{sec:results}.
\par
The third invariant we consider is a covering of a virtual string.
For a nanoword $\alpha$ and a positive integer $r$ we construct a 
new nanoword $\cover{\alpha}{r}$ by deleting all letters $X$ in $\alpha$
where $\n(X)$ is not equal to $kr$ for some $k$ in $\Z$. We call
$\cover{\alpha}{r}$ the \textcover{r} of $\alpha$. 
In \cite{Turaev:2004}, Turaev showed that if $\alpha_1$
and $\alpha_2$ are homotopic nanowords then $\cover{\alpha_1}{r}$ and
$\cover{\alpha_2}{r}$ must also be homotopic. This means that the
\textcover{r} of a virtual string $\Gamma$ is well-defined and we write
it as $\cover{\Gamma}{r}$. If $\alpha$ represents $\Gamma$,
$\cover{\alpha}{r}$ represents $\cover{\Gamma}{r}$.
\begin{rem}
When $r$ is $1$, $\cover{\Gamma}{1}$ is equal to $\Gamma$ for all
virtual strings $\Gamma$. Thus in this case the operation of covering is
 just the identity operation.
\end{rem}
\section{Canonical description of primitive based matrices}\label{sec:canonical}
To effectively use primitive based matrices as an invariant of virtual
strings we need an easy way to determine whether two primitive based
matrices, $P_1$ and $P_2$, of size $n$ are isomorphic or not. In
specific cases this may just be a simple case of observing that $P_1$
contains some integer $x$ which does not appear in $P_2$. However, in
general, a systematic approach will be more useful.
\par
One such approach would consist of testing all $\factorial{(n-1)}$
different bijections between the sets associated with $P_1$ and
$P_2$ that map the special element in $P_1$ to the special element in
$P_2$, to see whether we have an isomorphism. One downside of this
approach is that as $n$ gets larger the number of bijections we have to
consider increases exponentially. Another downside is that we have to
repeat the process each time we want to compare $P_2$ to another based
matrix. 
\par
The approach we have adopted is to find a canonical description for each
based matrix. Two primitive based matrices are equivalent under
isomorphism if and only if their canonical descriptions are the
same. This description is easy to calculate by computer and two such
descriptions are easy to compare even by hand. This section explains how
the canonical description is calculated.
\par
Let $\basedMatrices{n}$ denote the set of equivalence classes of based
matrices of size $n$ under isomorphism. We will define an injective map
$\phi$ from $\basedMatrices{n}$ to $\Z^k$ where $k$ is
$\frac{1}{2}n(n-1)$.
\par
For a based matrix $(G,s,b)$, we can pick an ordering of the
elements of $G$ and then write $b$ as a skew-symmetric matrix. When we
write $b$ in this way, by convention we always pick the special element
$s$ to be first in the ordering.
Thus any based matrix can be written as a skew-symmetric matrix in up to
$\factorial{(n-1)}$ different ways.
\par
We can define a map $\theta$ from the set of
$n$ by $n$ skew-symmetric matrices $\skewSymMatrices{n}$ to $\Z^k$ as
follows. Given a skew-symmetric matrix $A$ with entries $a_{i,j}$ we map
$A$ to the $k$-tuple
\begin{equation*}
(a_{2,1},a_{3,1},\dotsc,a_{n,1},a_{3,2},a_{4,2},\dotsc,a_{n,2},a_{4,3},\dotsc,a_{n,n-1})
\end{equation*}
where we have listed the entries in the lower left triangular area, below
the main diagonal. We list the entries in each column from top to
bottom, starting from the left column. For example, if $M$ is the
skew-symmetric matrix
\begin{equation*}
\begin{pmatrix}
 0 & -1 & 2 & -1 \\
 1 &  0 & 2 &  0 \\
-2 & -2 & 0 & -1 \\
 1 &  0 & 1 &  0 \\
\end{pmatrix},
\end{equation*}
then $\theta(M)$ is $(1, -2, 1, -2, 0, 1)$.
\par
Note that given an element $p$ in $\Z^k$ we can use skew-symmetry to
construct a unique skew-symmetric matrix $A$ such that $\theta(A)$ is
$p$. It is then easy to see that $\theta$ is a bijection.
\par
Standard numerical order on $\Z$ induces an order on $\Z^k$ as
follows. Assume $p$ and $q$ are different elements in $\Z^k$. We write
$p$ as $(p_1,\dotsc,p_k)$ and $q$ as $(q_1,\dotsc,q_k)$. We say $p$ is
less than $q$ if there exists some $i$ less than or equal to $k$ such that
$p_i$ is less than $q_i$ and for all $j$ less than $i$, $p_j$ and $q_j$
are equal.
\par
To get a canonical description of a based matrix we could just consider
all $\factorial{(n-1)}$ associated skew-symmetric matrices, take
$\theta$ of each matrix and then take the minimal value in
$\Z^k$. However, this means that we must consider $\factorial{(n-1)}$
matrices for each based matrix. We take a different approach which, at
the cost of some complexity, tries to reduce the number of matrices we
have to consider. Of course, the canonical description we output depends
on the approach that we take.
\par
Our strategy is to break up the elements of the set associated with
the based matrix $P$ into equivalence classes, invariant under
isomorphism, and then order those equivalence classes in a way that is
also invariant under isomorphism. Then we consider all possible
permutations of 
elements within each equivalence class for all of the equivalence
classes. If there are $l$ such classes and the numbers of 
elements in each class are given by $n_1, n_2, \dotsc, n_l$ then the
total number of cases we consider is
\begin{equation*}
\prod_{i=1}^{l}\factorial{n_i}
\end{equation*}
which is bounded above by $\factorial{(n-1)}$.
In each case we get a skew-symmetric matrix $A$ and we can calculate
$\theta(A)$. We then take the minimal value of $\theta(A)$ in $\Z^k$
over all the cases and set this to be $\phi(P)$. As the equivalence
classes and the order on them are invariant under isomorphism and we
take the minimal value over all possible cases, it is clear that
$\phi(P)$ is injective. That is, if $\phi(P_1)$ equals $\phi(P_2)$,
$P_1$ and $P_2$ must be isomorphic as based matrices.  
\par
We note that in the best case we may end up with equivalence classes
each only containing a single element. In the worst case we may just
have a single equivalence class containing all the elements.
\par
We now define an equivalence relation on the elements in the set
associated with a based matrix $(G,s,b)$. As by convention the
element $s$ always comes first in the matrix we just need to consider
the other elements. 
\par
We use two properties of an element $g$ in $G$ which
are invariant under isomorphism.
The first property is simply the value $b(g,s)$. Considering an
isomorphism $f$ from $(G,s,b)$ to
$(G^\prime,s^\prime,b^\prime)$, we have
\begin{equation*}
b(g,s) = b^\prime(f(g),f(s)) = b^\prime(f(g),s^\prime),
\end{equation*}
which shows invariance under isomorphism.
\par
The second is a map $m_g$ from $\Z$ to $\Znonneg$. It is defined as follows
\begin{equation*}
m_g(i) = \#\{ h \in G-\{s\} | b(g,h) = i \}
\end{equation*}
where $\#$ indicates the number of elements in the set.
We check how $m_g$ behaves under the isomorphism $f$ defined above. As
$f$ is a bijection, for each $h$ in $G^\prime-\{s^\prime\}$ there exists
exactly one $k$ in $G-\{s\}$ such that $f(k)$ is $h$. By the definition
of isomorphism we also have $b(g,k) = b^\prime(f(g),f(k))$. Putting these
together we get 
\begin{align*}
m_{f(g)}(i) = & \#\{ h \in G^\prime-\{s^\prime\} | b^\prime(f(g),h) = i \} \\
= & \#\{ k \in G-\{s\} | b^\prime(f(g),f(k)) = i \} \\
= & \#\{ k \in G-\{s\} | b(g,k) = i \} \\
= & m_g(i).
\end{align*}
This shows that $m_g$ is invariant under isomorphism.
\par
We note there are a finite number of non-zero values of $m_g(i)$. Say
there are $l$ of them. Then we can represent $m_g$ as
$l$ pairs of the form $(i,m_g(i))$. We use the first element in each
pair to sort the pairs so that $i$ increases as we go through the
list. By concatenating the pairs into a $2l$-tuple we can summarise
$m_g$ uniquely. 
\par
As an example, suppose $m_g(2)=1$, $m_g(-1)=2$, $m_g(0)=3$ and
$m_g(i)=0$ for all other values of $i$. Then just considering the
non-zero values we get the pairs $(2,1)$, $(-1,2)$ and
$(0,3)$. Sorting these we get $(-1,2)$, $(0,3)$ and
$(2,1)$. Concatenating gives the $6$-tuple
$(-1,2,0,3,2,1)$.
\par
If $p=(p_i)$ is a $k$-tuple and $q=(q_i)$ is an $l$-tuple, we say $p$ is
less than $q$ if there exists a $j$ such that $p_j$ is less than $q_j$
and $p_i$ equals $q_i$ for all $i$ less than $j$, or $p_i$ equals $q_i$
for all $i$ less than or equal to $k$ and $k$ is less than $l$. 
We can then define an ordering on the maps $m_g$ by saying $m_g$ is less
than $m_h$ if the tuple associated with $m_g$ is less than the tuple
associated with $m_h$.
\par
We define our equivalence relation on $G-\{s\}$ by saying that $g$ and $h$
are equivalent if and only if $b(g,s)$ is equal to $b(h,s)$ and $m_g(i)$
is equal to $m_h(i)$ for all $i$. We write $[g]$ for the equivalence
class of $g$ under this relation.
We can then define an ordering on the equivalence classes in $G-\{s\}$ by
saying that $[g]$ is less than $[h]$ if $b(g,s)$ is less than $b(h,s)$,
or if $b(g,s)$ is equal to $b(h,s)$ and $m_g$ is less than $m_h$.
\par
To conclude this section we calculate $\phi$ of a based matrix as an
example.
\begin{ex}
Let $(G,s,b)$ be a based matrix, where $G$ is $\{s,A,B,C\}$ and the
 table 
\begin{equation*}
\begin{tabular}{ccccc}
    &  $s$ &  $A$ & $B$ &  $C$ \\
$s$ &  $0$ & $-1$ & $2$ & $-1$ \\
$A$ &  $1$ &  $0$ & $2$ &  $0$ \\
$B$ & $-2$ & $-2$ & $0$ & $-1$ \\
$C$ &  $1$ &  $0$ & $1$ &  $0$ \\
\end{tabular}
\end{equation*}
defines $b$.
\par
As $b(B,s)$ is $-2$ and $b(A,s)$ and $b(C,s)$ are both $1$, $B$ is
 in a different equivalence class to $A$ and $C$. In particular $[B]$ is
 less than $[A]$ and $[C]$.
\par
The map $m_A$ is given by $m_A(0)=2$, $m_A(2)=1$ and $m_A(i)=0$ for all
 other $i$. We summarise this as the $4$-tuple $(0,2,2,1)$. 
The map $m_C$ is given by $m_C(0)=2$, $m_C(1)=1$ and $m_A(i)=0$ for all
 other $i$. We summarise this as the $4$-tuple $(0,2,1,1)$. Comparing
 the two tuples we see that the one for $C$ is less than the one for
 $A$ and so $[C]$ is less than $[A]$.
\par
In this case we are able to order the letters completely. The order we
 get is $s$, $B$, $C$, $A$. The corresponding matrix is
\begin{equation*}
\begin{pmatrix}
 0 & 2 & -1 & -1 \\
-2 & 0 & -1 & -2 \\
 1 & 1 &  0 &  0 \\
 1 & 2 &  0 &  0 \\
\end{pmatrix},
\end{equation*}
and so $\phi((G,s,b))$ is $(-2, 1, 1, 1, 2, 0)$.
\end{ex}
\section{Algorithm for enumeration}\label{sec:enumeration}
For any given non-negative integer $n$ our aim is to enumerate all
distinct virtual strings with minimal crossing number equal to $n$. We
give an algorithm to do this by using nanowords. Before explaining
the algorithm we make some definitions.
\par
An \emph{alphabet} $\mathcal{A}$ is a finite ordered set.
We call the elements of $\mathcal{A}$ \emph{letters}. We call the
ordering on $\mathcal{A}$ \emph{alphabetical order}. For use in examples
below we define $\mathcal{B}$ to be the $3$ letter alphabet $\{A,B,C\}$
with the standard alphabetical order.
\par
A Gauss word on an alphabet $\mathcal{A}$ is a sequence of letters in
$\mathcal{A}$ where each letter of $\mathcal{A}$ appears exactly
twice. If $\mathcal{A}$ has $n$ letters, a Gauss word on $\mathcal{A}$
necessarily has $2n$ letters. 
For example, $BBAA$ and $CACBCA$ are not Gauss words on $\mathcal{B}$,
but $BCACBA$ is.
\par
Note that the ordering on $\mathcal{A}$ induces an ordering on Gauss
words on $\mathcal{A}$. Given two Gauss words on $\mathcal{A}$, $u$ and
$v$, we compare initial sequences of letters in the words until we
find the first pair of letters that differ. The order of $u$ and $v$ are
determined by the order of the differing letters. If there is no such
differing pair, $u$ and $v$ are the same Gauss word. 
As with the ordering on the letters in $\mathcal{A}$, we call this
ordering on Gauss words alphabetical order.
As an example, $BCACBA$ comes before $BCCABA$ and after $BCACAB$ in the
alphabetical order induced by $\mathcal{B}$.
\par
Two Gauss words $w$ and $x$ on $\mathcal{A}$ are said to be $isomorphic$
if there exists a bijection $f$ from $\mathcal{A}$ to itself such that
$x$ is the result of applying $f$ letterwise to $w$. For example
$CBAACB$ is isomorphic to $ABCCAB$ by the bijection mapping $C$ to $A$,
$B$ to $B$ and $A$ to $C$. It is clear that isomorphism defines an
equivalence relation on the set of Gauss words on a fixed alphabet
$\mathcal{A}$.
\par
We say that a Gauss word $w$ on $\mathcal{A}$ is \emph{increasing} if
the first occurence of each letter of $\mathcal{A}$ in $w$ appears in
alphabetical order. For example, on $\mathcal{B}$ the Gauss word
$ABACBC$ is increasing and the Gauss word $CACBAB$ is not.
\begin{lem}\label{lem:gauss_increasing}
Given an alphabet $\mathcal{A}$, every Gauss word on $\mathcal{A}$ is
 isomorphic to an increasing Gauss word on $\mathcal{A}$.
\end{lem}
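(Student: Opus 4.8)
The plan is to prove this constructively: given an arbitrary Gauss word $w$ on $\mathcal{A}$, I would exhibit an explicit bijection $f$ of $\mathcal{A}$ whose letterwise application turns $w$ into an increasing Gauss word. Since applying a bijection letterwise is precisely the isomorphism relation defined above, producing such an $f$ immediately gives the claim.

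First I would read $w$ from left to right and record, for each letter of $\mathcal{A}$, the position of its first occurrence. Because each letter appears exactly twice in $w$, every letter has a well-defined first occurrence, and distinct letters occupy distinct first-occurrence positions. Listing the letters in increasing order of their first-occurrence position therefore yields a sequence $a_1, a_2, \dotsc, a_n$ (where $\mathcal{A}$ has $n$ letters) in which every letter of $\mathcal{A}$ appears exactly once; that is, the list is a permutation of $\mathcal{A}$. I would then define $f$ by sending $a_i$ to the $i$th letter of $\mathcal{A}$ in alphabetical order. Since $a_1, \dotsc, a_n$ enumerates $\mathcal{A}$ without repetition, $f$ is a bijection, and so $f(w)$ is a Gauss word isomorphic to $w$. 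To finish, I would verify that $f(w)$ is increasing: applying $f$ letterwise does not move any letter, so the first occurrence of $f(a_i)$ in $f(w)$ lies at the same position as the first occurrence of $a_i$ in $w$. Reading $f(w)$ from left to right, the first occurrences therefore appear in the order $f(a_1), f(a_2), \dotsc, f(a_n)$, which by construction is exactly the alphabetical order on $\mathcal{A}$. This is the definition of $f(w)$ being increasing.

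There is no serious obstacle here; this is a straightforward bookkeeping argument. The only points requiring a little care are confirming that the first-occurrence list is genuinely a permutation of $\mathcal{A}$ (which uses both that every letter appears, hence has a first occurrence, and that first occurrences are at distinct positions) and that relabelling preserves positions, so that the order of first occurrences is transported to $f(w)$ unchanged. Both facts are immediate from the definitions of a Gauss word and of isomorphism of Gauss words given above, so I expect the write-up to be short.
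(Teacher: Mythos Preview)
Your proposal is correct and follows essentially the same approach as the paper: the paper names your first-occurrence enumeration as a bijection $p\colon\{1,\dots,n\}\to\mathcal{A}$, names the alphabetical enumeration as $o$, and sets $f=o\circ p^{-1}$, which is exactly the map you describe. The verification that $f(w)$ is increasing is also the same, just phrased in terms of these named bijections.
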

\begin{proof}
Assume $\mathcal{A}$ has $n$ letters. We write $N$ for the set
 $\{1,2,\dotsc,n\}$. Then the order on 
 $\mathcal{A}$ gives a bijection $o$ which maps $N$ to $\mathcal{A}$.
For example, for $\mathcal{B}$, $o(1)=A$, $o(2)=B$ and $o(3)=C$.
\par
Given a Gauss word $w$ on $\mathcal{A}$, we can define a map $p$ from
 $N$ to $\mathcal{A}$ by defining $p(i)$ to be the $i$th new letter in
 $w$. For example, with $\mathcal{B}$ as above, $p$ for the Gauss word
 $CACBAB$ is given by $p(1)=C$, $p(2)=A$ and $p(3)=B$.
\par
As a Gauss word on $\mathcal{A}$ contains all letters in $\mathcal{A}$,
 $p$ must be surjective. As $p(i)$ is the $i$th new letter in the Gauss
 word, $p(i)=p(j)$ implies $i=j$ and so $p$ is injective. Thus $p$ is
 bijective.
\par
We define a map $f$ from $\mathcal{A}$ to $\mathcal{A}$ by setting
\begin{equation*}
f(X) = o(p^{-1}(X)). 
\end{equation*}
As $o$ and $p$ are bijections, so is $f$. We can thus apply $f$
 letterwise to the Gauss word $w$ to get an isomorphic Gauss word
 $w^{\prime}$. In $w^{\prime}$ the $i$th new letter will be 
\begin{equation*}
f(p(i)) = o(p^{-1}(p(i))) = o(i).
\end{equation*}
Thus the first occurence of each letter in $w^{\prime}$ appears in
 alphabetical order and so $w^{\prime}$ is increasing.
\end{proof}
\par
A nanoword on $\mathcal{A}$ is a Gauss word on $\mathcal{A}$ with a map
from $\mathcal{A}$ to $\{a,b\}$.
By Lemma~\ref{lem:gauss_increasing} we need only deal with
nanowords that have
increasing Gauss words. Whenever we apply a shift move or a homotopy
move to a nanoword $\alpha$ to get a nanoword $\beta$ that does not have an
increasing Gauss word, we can apply an isomorphism to $\beta$ to get a
nanoword with an increasing Gauss word. 
\par
As $\mathcal{A}$ contains a finite number of letters, the set of
nanowords on $\mathcal{A}$ up to isomorphism is finite. We give an
explicit calculation of its size here. If $\mathcal{A}$
has $n$ letters, the number of different Gauss words on $\mathcal{A}$ is
given by
\begin{equation*}
\frac{\factorial{(2n)}}{2^n}.
\end{equation*}
We note that each equivalence class of Gauss words on $\mathcal{A}$
under isomorphism contains $\factorial{n}$ words. This is because the
order of the first occurence of each letter of $\mathcal{A}$ in the
Gauss word determines the Gauss word within the equivalence class.
Thus the number of increasing Gauss words on $\mathcal{A}$ is
\begin{equation*}
\frac{\factorial{(2n)}}{\factorial{n}2^n}.
\end{equation*}
The number of nanowords on $\mathcal{A}$ with a given Gauss word is
equal to the number of maps from $\mathcal{A}$ to $\{a,b\}$. This is
just $2^n$. Thus the total number of nanowords on $\mathcal{A}$ with
increasing Gauss words, and thus the total number of nanowords on
$\mathcal{A}$ up to isomorphism is
\begin{equation*}
\frac{\factorial{(2n)}}{\factorial{n}}.
\end{equation*}
\par
We define the \emph{3-class} of a nanoword $\alpha$ as the equivalence class
under shift moves and $3$-moves. We write $\left[\alpha\right]_3$ for the
$3$-class of $\alpha$. 
\par
A \emph{type word} is a sequence of letters in the set $\{a,b\}$.
By defining $a$ to be less
than $b$, we can define an order on type words in the same way as we
defined an order on Gauss words. Given a nanoword $\alpha$ on
$\mathcal{A}$, we consider each letter of $\mathcal{A}$ in alphabetical
order. For each letter we write the type of that letter in $\alpha$. The
concatenation of the resultant sequence of letters is defined to be the
type word of a nanoword $\alpha$ on $\mathcal{A}$. For example, the type
word of the nanoword $\nanoword{ABACBC}{aab}$ on $\mathcal{B}$ is $aab$.
\par
We define an order on nanowords on $\mathcal{A}$ in the following way.
We say that $\alpha$ is less than $\beta$ if its Gauss word is less than that
of $\beta$, or if the Gauss words of $\alpha$ and $\beta$ are equal and
the type word of $\alpha$ is less than that of $\beta$.
So, for example, $\nanoword{ABACBC}{bba}$ is less than
$\nanoword{ABCBCA}{aab}$ but greater than $\nanoword{ABACBC}{aab}$.  
\par
We say that a nanoword $\alpha$ is \emph{alphabetically minimal} in
$\left[\alpha\right]_3$ if there does not exist a nanoword $\beta$ less
than $\alpha$ such that $\left[\beta\right]_3$ is equal to
$\left[\alpha\right]_3$.
\par
We say that a nanoword is \emph{reducible} if there exists a crossing
reducing $1$-move or $2$-move which can be applied to the nanoword. We say
that the $3$-class of a nanoword $\alpha$ is \emph{reducible} if there
exists a reducible nanoword $\beta$ such that $\left[\beta\right]_3$ is
equal to $\left[\alpha\right]_3$. A nanoword or a $3$-class is
\emph{irreducible} if it is not reducible. 
\par
To generate all the virtual string candidates with $n$ crossings we first
pick some ordered set of $n$ elements $\mathcal{A}$. We could just
use the set of integers from $1$ to $n$. However, to display the nanowords
in compact form it is useful to be able to represent each letter in the
alphabet with a single character. Thus, when $n$ is less than or equal
to $26$, we use the initial $n$ letters of the $26$ letter English
alphabet and inherit the usual ordering.
\par
The generating algorithm works in the following way:
\par
1. Enumerate all increasing Gauss words on $\mathcal{A}$.
\par
2. For each Gauss word in the list we consider all possible assignments
of the crossing types $a$ and $b$ to the $n$ letters. This gives us
$2^n$ possible nanowords for each Gauss word.
\par
3. For each nanoword $\alpha$ in the list, we generate the set of elements in
equivalence class of $\alpha$ under shift and $3$-moves. 
\par
4. If $\alpha$ is not alphabetically minimal in the equivalence class, we
discard $\alpha$ and then consider the next nanoword in the list.
\par
5. If any nanoword in the equivalence class is reducible, we discard $\alpha$ and
then consider the next nanoword in the list.
\par
6. Add $\alpha$ to the list of virtual string candidates and then consider
the next nanoword in the list.
\par
We note that Step~4 is required to prevent duplicates appearing in the
list. Step~5 is required to check for minimality of the crossing
number. If $\alpha$ is homotopic to a reducible $n$ letter nanoword $\beta$
then $\alpha$ is homotopic to some nanoword $\beta^\prime$ with less than
$n$ crossings. Thus the minimal crossing number of the virtual string
represented by $\alpha$ is less than $n$.
\par
We note that there are ways to optimise the algorithm. We consider
two simple ways here.
\par
Firstly we can reduce the number of nanowords that we have to consider. Let
$w$ be a Gauss word of the form $yXXz$ for some letter $X$ and some
(possibly empty) sequences of letters $y$ and $z$. When we derive a nanoword
$\alpha$ from $w$ by assigning crossing types, no matter whether we
assign type $a$ or type $b$ to $X$, $\alpha$ will be reducible by a
$1$-move. We can either avoid generating such Gauss words, or eliminate
them as they are generated during Step~1.
\par
Secondly we can make the checks in Step~4 and Step~5 as we generate the
set of elements in the equivalence class of a nanoword $\alpha$ in Step~3. If we
generate a nanoword $\beta$ that is alphabetically less than $\alpha$ or is
reducible, we can discard $\alpha$ straight away. There is no need to
continue calculating the $3$-class.
\par
Once we have a list of virtual string candidates for minimal crossing
number $n$ we need to determine whether they are distinct from each
other and whether they are distinct from virtual string candidates of
smaller minimal crossing number. One approach is to use virtual string
invariants and we consider this in the next section.
\section{Results of enumeration}\label{sec:results}
It is clear that there is a unique virtual string with $0$
crossings. Methodical examination of nanowords of $1$ and $2$ letters
shows that they are all homotopic to the trivial nanoword. Thus
there are no virtual strings with minimum crossing number of $1$ or
$2$. For $3$ letters we use the algorithm to get the following two
nanowords $\nanoword{ABACBC}{aab}$ 
and $\nanoword{ABACBC}{abb}$. We can calculate the $u$-polynomials of these
nanowords. For the trivial virtual string the $u$-polynomial is $0$. For
$\nanoword{ABACBC}{aab}$, the $u$-polynomial is $2t-t^2$. For
$\nanoword{ABACBC}{abb}$, the $u$-polynomial is $t^2-2t$. Thus these three
nanowords are all mutually non-homotopic. Thus the number of virtual strings
with minimum crossing number $3$ is $2$. Turaev has already pointed out
this fact in \cite{Turaev:2004}.
\par
When we consider virtual strings with minimum crossing number of $4$ we
find examples of strings that cannot be distinguished by the
$u$-polynomial alone. For example, the nanowords $\nanoword{ABACBDCD}{abab}$
and $\nanoword{ABCADCBD}{aaab}$ have $u$-polynomial $0$ which is the same as
the trivial virtual string. However, the primitive based matrices of
these two nanowords are different to each other and different to the
primitive based matrix of the trivial virtual string. This implies that
the virtual strings they represent are all distinct.
\par
For minimum crossing number of $4$ we used a computer to find $26$
virtual string candidates. We can show that these are all distinct from
each other and from the $3$ virtual strings of lower minimum crossing
number by using primitive based matrices. We show the results in
Table~\ref{tab:vstrings4}. As far as we are aware, the total of $26$ for
minimum crossing number of $4$ was previously unknown.
We summarise the number of distinct virtual strings for minimum crossing
number of $4$ or less in Table~\ref{tab:numvstrings}.
\par
\begin{table}[hbt]
\begin{center}
\begin{tabular}{c|c|c|c|c}
ID & Nanoword & $u(t)$ & $\rho$ & Based matrix \\
\hline
\vstring{0}{1}  & \trivial & $0$ & $0$ & $0$ \\
\vstring{3}{1}  & ABACBC:aab    & $-t^2+2t$ & $3$ & $-2,1,1,1,2,0$ \\
\vstring{3}{2}  & ABACBC:abb    & $t^2-2t$ & $3$ & $-1,-1,2,0,2,1$ \\
\vstring{4}{1}  & ABABCDCD:aaaa & $0$ & $4$ & $-1,-1,1,1,0,1,0,0,1,0$ \\
\vstring{4}{2}  & ABABCDCD:aabb & $0$ & $4$ & $-1,-1,1,1,-1,1,1,1,1,1$ \\
\vstring{4}{3}  & ABABCDCD:bbbb & $0$ & $4$ & $-1,-1,1,1,0,1,2,2,1,0$ \\
\vstring{4}{4}  & ABACBDCD:aaaa & $0$ & $4$ & $-1,0,0,1,1,1,0,-1,1,1$ \\
\vstring{4}{5}  & ABACBDCD:aaab & $-t^2+2t$ & $4$ & $-2,0,1,1,1,2,1,1,1,1$ \\
\vstring{4}{6}  & ABACBDCD:aaba & $-t^2+2t$ & $4$ & $-2,0,1,1,2,1,1,1,0,0$ \\
\vstring{4}{7}  & ABACBDCD:abaa & $t^2-2t$ & $4$ & $-1,-1,0,2,0,0,1,1,1,2$ \\
\vstring{4}{8}  & ABACBDCD:abab & $0$ & $4$ & $-1,0,0,1,1,1,1,0,0,0$ \\
\vstring{4}{9}  & ABACBDCD:abba & $0$ & $4$ & $-2,-1,1,2,0,1,3,0,1,0$ \\
\vstring{4}{10} & ABACBDCD:abbb & $t^2-2t$ & $4$ & $-1,-1,0,2,-1,0,2,0,1,1$ \\
\vstring{4}{11} & ABACBDCD:baaa & $t^2-2t$ & $4$ & $-1,-1,0,2,-1,1,2,1,1,1$ \\
\vstring{4}{12} & ABACBDCD:baab & $0$ & $4$ & $-2,-1,1,2,1,2,1,2,2,1$ \\
\vstring{4}{13} & ABACBDCD:baba & $0$ & $4$ & $-1,0,0,1,0,0,1,0,1,1$ \\
\vstring{4}{14} & ABACBDCD:babb & $t^2-2t$ & $4$ & $-1,-1,0,2,0,1,2,0,2,0$ \\
\vstring{4}{15} & ABACBDCD:bbab & $-t^2+2t$ & $4$ & $-2,0,1,1,0,2,2,1,0,0$ \\
\vstring{4}{16} & ABACBDCD:bbba & $-t^2+2t$ & $4$ & $-2,0,1,1,1,2,1,0,0,1$ \\
\vstring{4}{17} & ABACBDCD:bbbb & $0$ & $4$ & $-1,0,0,1,0,0,2,1,0,0$ \\
\vstring{4}{18} & ABACDBCD:aabb & $-t^3+t^2+t$ & $4$ & $-3,0,1,2,2,1,3,0,1,0$ \\
\vstring{4}{19} & ABACDBCD:abbb & $t^3-t^2-t$ & $4$ & $-2,-1,0,3,0,1,3,0,1,2$ \\
\vstring{4}{20} & ABACDBCD:baaa & $t^3-t^2-t$ & $4$ & $-2,-1,0,3,1,1,2,1,3,1$ \\
\vstring{4}{21} & ABACDBCD:bbaa & $-t^3+t^2+t$ & $4$ & $-3,0,1,2,1,3,2,1,1,1$ \\
\vstring{4}{22} & ABACDBDC:aabb & $-t^3+3t$ & $4$ & $-3,1,1,1,1,2,3,0,0,0$ \\
\vstring{4}{23} & ABACDBDC:abbb & $t^3-3t$ & $4$ & $-1,-1,-1,3,0,0,3,0,2,1$ \\
\vstring{4}{24} & ABCADBCD:aaab & $-t^3+2t^2-t$ & $4$ & $-3,-1,2,2,1,2,3,1,2,0$ \\
\vstring{4}{25} & ABCADBCD:abbb & $t^3-2t^2+t$ & $4$ & $-2,-2,1,3,0,2,3,1,2,1$ \\
\vstring{4}{26} & ABCADCBD:aaab & $0$ & $4$ & $-2,-2,2,2,0,2,3,1,2,0$ \\
\end{tabular}
\end{center}
\caption{Virtual strings up to 4 crossings}
\label{tab:vstrings4}
\end{table}
\begin{table}[hbt]
\begin{center}
\begin{tabular}{c|c}
Crossings & Number \\
\hline
0 & 1 \\
1 & 0 \\
2 & 0 \\
3 & 2 \\
4 & 26
\end{tabular}
\end{center}
\caption{Numbers of virtual strings}
\label{tab:numvstrings}
\end{table}
\par
We can calculate the coverings of the virtual strings in
Table~\ref{tab:vstrings4}. All of the virtual strings in the table
except for 
one have trivial \textcover{r}s for all $r$ other than $1$. The
exception is \vstring{4}{26} which is fixed under
\textcover{2}. For $r$ not $1$ or $2$ its \textcover{r}s are also
trivial.
\par
Given a \surfdiag{} $(S,D)$, we can define a \surfdiag{} $(-S,D)$
where $-S$ is $S$ with the orientation reversed. We can think of
$(-S,D)$ as a mirror image of $(S,D)$.
It is easy to see that if $(S_1,D_1)$ and $(S_2,D_2)$ are homotopic
\surfdiag s then $(-S_1,D_1)$ is homotopic to $(-S_2,D_2)$.
Thus, for a virtual string $\Gamma$ represented by a \surfdiag{}
$(S,D)$, we can define the mirror of $\Gamma$ to be the virtual string
represented by $(-S,D)$. We write this virtual string
$\overline{\Gamma}$.
As swapping the orientation on a surface twice gets us back to the
original surface, this reflection operation
is an involution. Thus $\overline{\overline{\Gamma}}$ is $\Gamma$. 
\par
Similarly, given a \surfdiag{} $(S,D)$, we write $-D$ to denote $D$ with
its orientation reversed. We then define $(S,-D)$ to be the inverse of
$(S,D)$. 
Again, this operation is well-defined under homotopy. Thus, for a
virtual string $\Gamma$ represented by $(S,D)$, we can define the
inverse of $\Gamma$, $-\Gamma$, to be the virtual string represented by
$(S,-D)$. As the inverse of $(S,-D)$ is $(S,D)$, $-(-\Gamma)$ is
$\Gamma$.
\par
The two operations, reflection and inversion, are commutative. We can
compose these two operations to get the mirror inverse of $\Gamma$,
written $-\overline{\Gamma}$.
\par
Given a nanoword $\alpha$ representing a virtual string $\Gamma$ it is easy
to calculate nanowords representing $-\Gamma$, $\overline{\Gamma}$ and
$-\overline{\Gamma}$. Swapping the type of each letter in $\alpha$ derives a
new nanoword $\swap(\alpha)$ which represents $\overline{\Gamma}$.
We define $-\alpha$ to be the nanoword given by reversing the order
of the letters in the Gauss word of $\alpha$ and swapping the types of
the letters. Then $-\alpha$ represents $-\Gamma$
and $-\swap(\alpha)$ represents $-\overline{\Gamma}$.
\par
Usually, when tables of classical knots are given, reflections and
inversions of knots are considered to be the same knot type. It is
therefore natural to define an unoriented equivalence of virtual strings
where $\Gamma$, $-\Gamma$, $\overline{\Gamma}$ and $-\overline{\Gamma}$
are considered to be the same. Table~\ref{tab:symmvstrings} lists
distinct virtual strings under this unoriented equivalence and indicates
which virtual strings from Table~\ref{tab:vstrings4} are derived under
the operations of reflection, inversion and reflected inversion. Under
this kind of equivalence there is $1$ virtual string with no crossings,
$1$ virtual string with minimal crossing number $3$ and $11$ virtual
strings with minimal crossing number $4$.
\begin{table}[hbt]
\begin{center}
\begin{tabular}{c|c|c|c|c}
Virtual String & Mirror & Inverse & Mirror-Inverse & Symmetry Type \\
\hline
\trivial & = & = & = & a \\
\vstring{3}{1} & = & \vstring{3}{2} & \vstring{3}{2} & $+$ \\
\vstring{4}{1} & \vstring{4}{3} & \vstring{4}{3} & = & $-$ \\
\vstring{4}{2} & = & = & = & a \\
\vstring{4}{4} & \vstring{4}{17} & \vstring{4}{17} & = & $-$ \\
\vstring{4}{5} & \vstring{4}{16} & \vstring{4}{10} & \vstring{4}{11} & c \\
\vstring{4}{6} & \vstring{4}{15} & \vstring{4}{14} & \vstring{4}{7} & c \\
\vstring{4}{8} & \vstring{4}{13} & = & \vstring{4}{13} & i \\
\vstring{4}{9} & \vstring{4}{12} & \vstring{4}{12} & = & $-$ \\
\vstring{4}{18} & \vstring{4}{21} & \vstring{4}{20} & \vstring{4}{19} & c \\
\vstring{4}{22} & = & \vstring{4}{23} & \vstring{4}{23} & $+$ \\
\vstring{4}{24} & = & \vstring{4}{25} & \vstring{4}{25} & $+$ \\
\vstring{4}{26} & = & = & = & a \\
\end{tabular}
\end{center}
\caption{Virtual strings under reflection and inversion. Here `=' means
 the homotopy type of the virtual string is unchanged by the operation.}
\label{tab:symmvstrings}
\end{table}
\par
We note that the homotopy types of some virtual strings are unchanged
under some operations. 
We call a virtual string $\Gamma$ \emph{invertible} if $\Gamma$ is
homotopic to $-\Gamma$ and \emph{noninvertible} if it is not.
We call a virtual string $\Gamma$ \emph{amphicheiral} if $\Gamma$ is
homotopic to $\overline{\Gamma}$ or $-\overline{\Gamma}$ and
\emph{chiral} if it is not.
A virtual string $\Gamma$ which is homotopic to
$\overline{\Gamma}$ is called \emph{positive amphicheiral} (sometimes
written \emph{$+$amphicheiral}). 
A virtual string $\Gamma$ which is homotopic to
$-\overline{\Gamma}$ is called \emph{negative amphicheiral} (sometimes
written \emph{$-$amphicheiral}).
If a virtual string $\Gamma$ is amphicheiral and invertible we call it
\emph{fully amphicheiral}.
\par
We can thus classify virtual strings into
five different types depending on their behaviour under the three
operations, reflection, inversion and reflected inversion.
This classification is summarised in Table~\ref{tab:symmtypes}.
Note that the list is complete
because if two different operations do not change the homotopy type of
the virtual string, the third operation cannot either.
We remark that symmetries of classical knots also can be classified into
five different types depending on whether the knot is equivalent or not
to its mirror image, inversion or the mirror image of its
inversion. This is discussed in a paper by Hoste, Thistlethwaite and
Weeks \cite{Hoste/Thistlethwaite/Weeks:Knots}. We have used their
terminology to describe these symmetry types and
Table~\ref{tab:symmtypes} is based on a table from their paper. 
\par
Virtual strings of all
five types do actually exist. The final column in
Table~\ref{tab:symmvstrings} indicates the type of each virtual string. 
\begin{table}[hbt]
\begin{center}
\begin{tabular}{c|l|l}
Type & Unchanged Under & Description \\
\hline
c & None & Chiral, noninvertible \\
i & Inversion only & Chiral, invertible \\
$+$ & Reflection only & $+$Amphicheiral, noninvertible \\
$-$ & Inverted reflection only & $-$Amphicheiral, noninvertible \\
a & All & Amphicheiral, invertible
\end{tabular}
\end{center}
\caption{Classification of virtual strings by their behaviour under
 reflection, inversion and reflected inversion.}
\label{tab:symmtypes}
\end{table}
\par
When we consider virtual strings with minimum crossing number of $5$,
primitive based matrices are no longer enough to distinguish distinct
virtual strings.
We now give an example of a pair of virtual strings having minimum
crossing number $5$ which cannot be distinguished by their primitive
based matrices. We show that they are different by considering their
\textcover{2}s. 
\par
We write $\alpha$ for the the nanoword $\nanoword{ABACBDEDCE}{baabb}$ and
$\beta$ for the nanoword $\nanoword{ABACDECDBE}{bbaaa}$. The primitive based
matrices for these two nanowords are isomorphic.
As the $u$-polynomial is derived from the primitive based matrix, the
$u$-polynomials for $\alpha$ and $\beta$ are equal. In fact,
$u_\alpha(t)=u_\beta(t)=0$. 
\par
We will now calculate the \textcover{2} of $\alpha$ and $\beta$. In each
case we will remove letters $X$ for which $\n(X)$ is odd.
\par
For $\alpha$ we have $\n(A)=-1$, $\n(B)=2$, $\n(C)=-2$, $\n(D)=1$ and
$\n(E)=0$. We remove the letters $A$ and $D$ to get
$\nanoword{BCBECE}{aab}$ which represents $\cover{\alpha}{2}$. It is then
easy to calculate that the $u$-polynomial of $\cover{\alpha}{2}$ is
$-t^2+2t$. Thus $\cover{\alpha}{2}$ is non-trivial. In fact, since the
nanoword we obtained only has $3$ letters we can use
Table~\ref{tab:vstrings4} to identify $\cover{\alpha}{2}$ as the  
virtual string \vstring{3}{1}.
\par
For $\beta$ we have $\n(A)=1$, $\n(B)=-2$, $\n(C)=2$, $\n(D)=0$ and
$\n(E)=-1$. We remove the letters $A$ and $E$ to get
$\nanoword{BCDCDB}{baa}$ which represents $\cover{\beta}{2}$.
Calculating the $u$-polynomial of $\cover{\beta}{2}$, we find that it is
$0$. Using Table~\ref{tab:vstrings4} it is clear $\cover{\beta}{2}$
is trivial. Thus $\cover{\alpha}{2}$ and $\cover{\beta}{2}$ are not
homotopic and this implies that
$\alpha$ and $\beta$ are not homotopic either. 
\begin{table}[hbt]
\begin{center}
\begin{tabular}{c|c|c}
Nanoword & Based matrix & \textcover{2}\\
\hline
ABACBDECDE:abbbb & $-2,-1,0,1,2,-1,1,1,3,1,0,1,0,1,0$ & \trivial \\
ABACDECEBD:aaaba & $-2,-1,0,1,2,-1,1,1,3,1,0,1,0,1,0$ & \vstring{3}{2} \\
& & \\
ABACBDEDCE:aaaab & $-2,-1,0,1,2,0,1,1,3,0,0,1,1,1,-1$ & \vstring{3}{1} \\
ABACDECDBE:aabbb & $-2,-1,0,1,2,0,1,1,3,0,0,1,1,1,-1$ & \trivial \\
& & \\
ABACBDEDCE:baabb & $-2,-1,0,1,2,1,1,2,1,1,2,2,0,1,2$ & \vstring{3}{1} \\
ABACDECDBE:bbaaa & $-2,-1,0,1,2,1,1,2,1,1,2,2,0,1,2$ & \trivial \\
& & \\
ABACBDECDE:baaaa & $-2,-1,0,1,2,2,1,2,1,0,2,2,1,1,1$ & \trivial \\
ABACDECEBD:baabb & $-2,-1,0,1,2,2,1,2,1,0,2,2,1,1,1$ & \vstring{3}{2}
\end{tabular}
\end{center}
\caption{Pairs of virtual strings with 5 crossings that can be
 distinguished by coverings but not by their primitive based matrices.}
\label{tab:5onlybycover}
\end{table}
\par
In fact, for virtual strings of $5$ crossings, including the example
given above, there are four pairs of
virtual strings which have the same primitive based matrices but can be
distinguished by coverings. We list them in Table~\ref{tab:5onlybycover}.
\begin{table}[hbt]
\begin{center}
\begin{tabular}{c|c|c}
Nanoword & $\rho$ & Based matrix\\
\hline
ABABCDCD:aabb & 4 & $-1,-1,1,1,-1,1,1,1,1,1$ \\
ABABCDCEDE:bbaba & 4 & $-1,-1,1,1,-1,1,1,1,1,1$ \\
ABABCDCEDE:aabab & 4 & $-1,-1,1,1,-1,1,1,1,1,1$ \\
 & & \\
ABABCDCD:aaaa & 4 & $-1,-1,1,1,0,1,0,0,1,0$ \\
ABABCDCEDE:aaaba & 4 & $-1,-1,1,1,0,1,0,0,1,0$ \\
 & & \\
ABABCDCD:bbbb & 4 & $-1,-1,1,1,0,1,2,2,1,0$ \\
ABABCDCEDE:bbbab & 4 & $-1,-1,1,1,0,1,2,2,1,0$ \\
 & & \\
ABACBDCD:bbbb & 4 & $-1,0,0,1,0,0,2,1,0,0$ \\
ABACBDCEDE:babab & 4 & $-1,0,0,1,0,0,2,1,0,0$ \\
 & & \\
ABACBDCD:aaaa & 4 & $-1,0,0,1,1,1,0,-1,1,1$ \\
ABACBDCEDE:ababa & 4 & $-1,0,0,1,1,1,0,-1,1,1$ \\
 & & \\
ABACBDCD:abba & 4 & $-2,-1,1,2,0,1,3,0,1,0$ \\
ABCADBECDE:bbbbb & 4 & $-2,-1,1,2,0,1,3,0,1,0$ \\
 & & \\
ABACBDCD:baab & 4 & $-2,-1,1,2,1,2,1,2,2,1$ \\
ABCADBECDE:aaaaa & 4 & $-2,-1,1,2,1,2,1,2,2,1$ \\
 & & \\
ABACDBDECE:abbab & 5 & $-1,-1,0,1,1,-1,0,1,1,0,1,1,1,1,1$ \\
ABACDBECED:ababb & 5 & $-1,-1,0,1,1,-1,0,1,1,0,1,1,1,1,1$ \\
 & & \\
ABACDBDECE:abaab & 5 & $-1,-1,0,1,1,-1,1,1,1,1,1,1,0,0,1$ \\
ABACDBECED:babaa & 5 & $-1,-1,0,1,1,-1,1,1,1,1,1,1,0,0,1$
\end{tabular}
\end{center}
\caption{Groups of nanowords with $5$ letters that cannot be
 distinguished by primitive based matrices or coverings. It is an open
 question as to whether they are homotopic to each other within each
 group.}
\label{tab:5groups}
\end{table}
\par
In our enumeration of virtual strings with $5$ crossings we discovered
$8$ pairs and a triplet of nanowords which cannot be
distinguished by the $u$-polynomial, primitive based matrices or
coverings. We list these nanowords in Table~\ref{tab:5groups}. All
the nanowords in the list have trivial $u$-polynomial and trivial
$r$-covering for any $r$ other than $1$. Of course, it is possible that
each pair or triplet of nanowords are actually homotopic to each other, in
which case it would be no surprise that their invariants are
identical. However, we have not yet found a sequence of homotopy moves 
relating any pair of nanowords in the list. It is also possible that there
is another invariant we can use to distinguish the nanowords. 
For example, we have not considered the invariants derived from Weyl
algebras defined by Fenn and Turaev in \cite{Fenn/Turaev:Weyl_algebra}.
\section{Kadokami's statement}\label{sec:kadokami}
We defined a \surfdiag{} as an oriented circle immersed in a surface
with self-intersections limited to transverse
double points. We can generalize this definition by
allowing multiple oriented circles to be immersed in the plane.
For a natural number $n$
we define an $n$-component \surfdiag{} to be a pair $(S,D)$ where $S$ is a
surface and $D$ is the immersion of $n$
oriented circles immersed in $S$ with self-intersections limited to
transverse double points. 
In this section we just consider \surfdiag{}s with a finite number of
double points. 
As before we can define an equivalence relation on such diagrams by
stable homeomorphisms and homotopy moves. We define an
$n$-component virtual string to be an equivalence class of such diagrams
under this equivalence relation. Note that, since applying stable
homeomorphisms or homotopy moves to an $n$-component \surfdiag{} 
does not change the number of components, the number of components is
invariant under this equivalence relation.
\par
We note that, just as for the single component case, we can also define
$n$-component virtual strings via planar diagrams of $n$ curves where
virtual crossings are permitted.
There is also a representation for $n$-component virtual string diagrams
which corresponds to the nanoword representation. Details of this
representation can be found in \cite{Turaev:KnotsAndWords}.
\par
From now on, in this section we will simply write \emph{diagram} to mean
a \surfdiag{} with one or more components.
\par
Given a diagram $(S,D)$, we can cut out the regular
neighbourhood $N(D)$ of $D$ in $S$. As $S$ is oriented, $N(D)$ is an
oriented surface with one or more boundaries. To each boundary of $N(D)$
we attach a disk. The result is an oriented surface $S^\prime$ in which
$D$ is embedded. Then $(S^\prime,D)$ is a diagram which
is stably homeomorphic to $(S,D)$. The surface $S^\prime$ is called the
\emph{canonical surface} for $D$. This surface has the minimum genus
over all surfaces on which $D$ can be drawn without needing virtual
crossings. The construction of this surface is well known. For example,
it is mentioned in \cite{Kadokami:Non-triviality} and
\cite{Turaev:2004}. 
\par
We note that if there is a $1$-move, $3$-move or crossing reducing
$2$-move that can be made on a diagram $(S,D)$, the
move can also be made on the diagram $(S^\prime,D)$ where $S^\prime$
is the canonical surface for $D$. This is because the moves can be drawn
on the plane without requiring virtual crossings. In some cases a
crossing introducing $2$-move necessitates adding a handle to the
surface. Kadokami explains this further in
\cite{Kadokami:Non-triviality}. 
\par
We say that two diagrams are in the same \emph{$3$-class}
if they are related by a finite sequence of $3$-moves
and stable homeomorphisms.
\par
We say that a diagram is \emph{reducible} if it is stably homeomorphic
to a diagram to which we can apply a crossing reducing
$1$-move or $2$-move. We say that a diagram is \emph{reduced} if it is not
reducible and it is not in the same $3$-class as a reducible diagram.
\par
In \cite{Kadokami:Non-triviality}, Kadokami makes the following
statement (his Theorem 3.8) which we paraphrase to use terminology from
this paper.
\begin{stmt}\label{stmt:irredMulti}
For any $n$, two reduced homotopic $n$-component diagrams
 are in the same $3$-class. 
\end{stmt}
In the case where $n$ is $1$ we can interpret this as follows. If
 $\alpha$ and $\beta$ are nanowords representing the same virtual string and
 both nanowords are in irreducible $3$-classes, $\alpha$ and $\beta$ are in
 the same irreducible $3$-class. In other words,
\begin{stmt}\label{stmt:irred}
A virtual string is represented by a unique irreducible $3$-class. 
\end{stmt}
\par
If true, Statement~\ref{stmt:irred} would imply that all nanowords
generated in the computer 
enumeration of virtual string candidates are in fact distinct. This is
because the computer enumeration searches for irreducible $3$-classes
and only outputs the minimal nanoword in each such class. The statement
implies that we can complete the enumeration without having to calculate
invariants. The statement also implies that to discover whether two
nanowords $\alpha$ and $\beta$ are homotopic, it is enough to find the
corresponding irreducible $3$-class in each case and compare
them. Nanowords $\alpha$ and $\beta$ are homotopic if and only if their 
irreducible $3$-classes are the same.
\par
\begin{figure}[hbt]
\begin{center}
\includegraphics{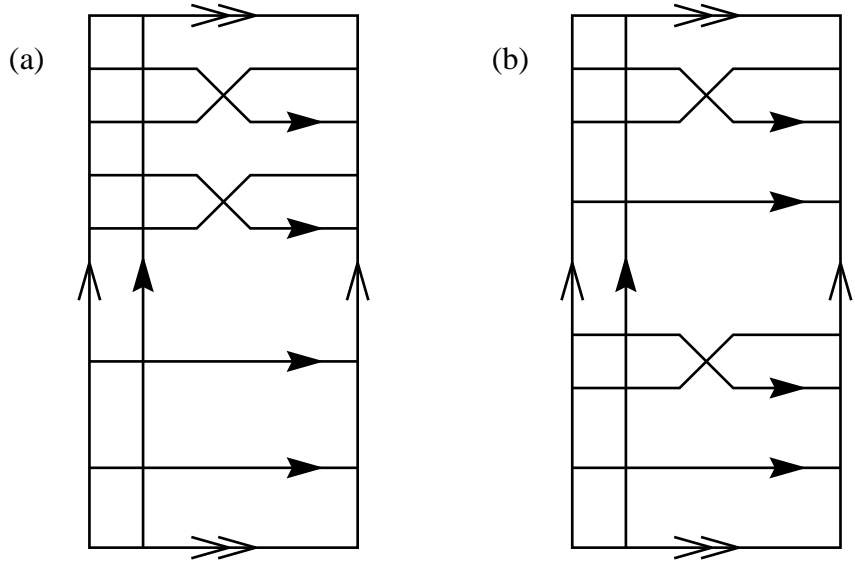}
\caption{A pair of five component diagrams each drawn on
 a torus. In each case the torus is formed by identifying opposite edges
 of the rectangle according to the arrows.}
\label{fig:counterexample}
\end{center}
\end{figure}
\begin{figure}[hbt]
\begin{center}
\includegraphics{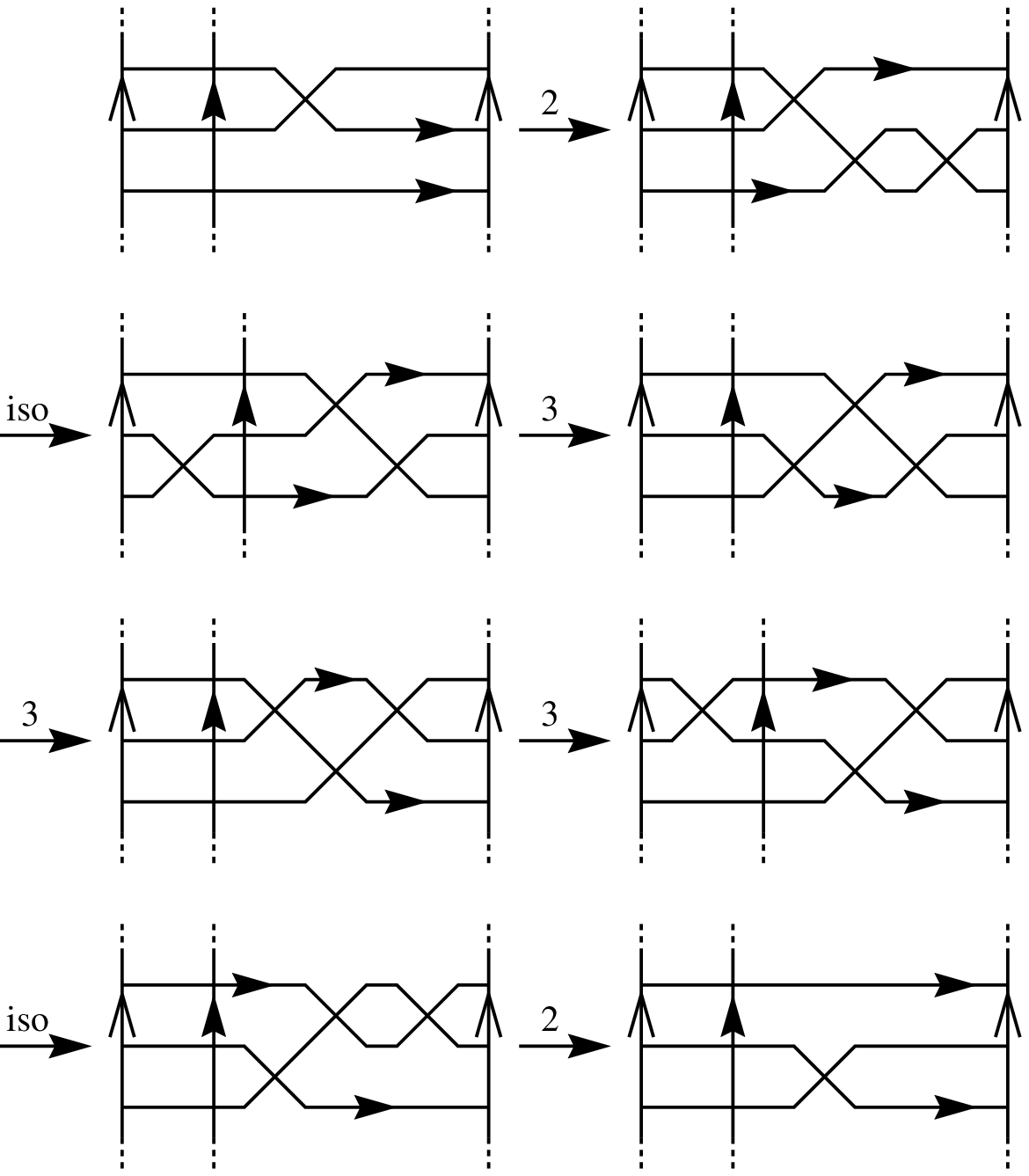}
\caption{A sequence of homotopy moves showing how to swap the positions
 of two components. In each diagram the left and right edges are
 identified to make a cylinder. Labels on the arrows between the
 diagrams indicate which kind of move was used. Here `iso' means
 isotopy achieved by pushing a crossing across the identified edges.} 
\label{fig:homotopicproof}
\end{center}
\end{figure}
Unfortunately, there is a problem. We have found a counter-example for
the multiple component case. We have the following proposition.
\begin{prop}
The two diagrams in Figure~\ref{fig:counterexample} form a counter-example to
 Statement~\ref{stmt:irredMulti}.
\end{prop}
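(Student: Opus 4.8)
The plan is to establish the three properties that together make the pair of diagrams in Figure~\ref{fig:counterexample} a counter-example to Statement~\ref{stmt:irredMulti}: both five-component diagrams are reduced, the two diagrams are homotopic, and yet they lie in distinct $3$-classes. Only the conjunction of these three facts contradicts the statement, since the statement asserts that reduced homotopic diagrams must share a $3$-class, so I would prove each point separately and then assemble them.

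First I would show the two diagrams are homotopic by exhibiting an explicit sequence of moves. The two diagrams differ by interchanging the positions of two components, and Figure~\ref{fig:homotopicproof} records exactly such an interchange in a cylindrical neighbourhood: a crossing-introducing $2$-move creates a pair of crossings between the two strands, one crossing is pushed around the identified edges by an isotopy, and a crossing-reducing $2$-move then removes the residual pair. Reading this sequence inside the torus of Figure~\ref{fig:counterexample}, and repeating it if more than one transposition is required, carries one diagram to the other using only stable homeomorphisms and homotopy moves, so the two diagrams are homotopic.

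Next I would separate the two $3$-classes using a $3$-class invariant that is deliberately \emph{not} a homotopy invariant (a homotopy invariant such as the primitive based matrix cannot help here, since it agrees on homotopic diagrams). The key observation is that a $3$-move replaces three crossings among three arcs by three crossings among the same three arcs, so every crossing retains the unordered pair of components to which it belongs; shift moves and stable homeomorphisms likewise leave this data untouched. Hence the crossing-incidence data, namely the pairwise crossing numbers $c_{ij}$ between components together with the total crossing number, is constant on each $3$-class, even though a $2$-move can alter it. I would then read off this data for the two diagrams and check that the interchange of components changes which pairs cross; the subtle point to verify carefully is that the difference survives relabelling of the components, so that the two incidence patterns are genuinely inequivalent and the diagrams cannot be related by $3$-moves and stable homeomorphisms alone.

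The hard part will be proving that both diagrams are reduced, since by definition this requires ruling out a crossing-reducing $1$-move or $2$-move not only on each diagram but on \emph{every} diagram in its $3$-class. Here I would use the fact that $3$-moves preserve the crossing number and that, for a fixed crossing number, there are only finitely many combinatorial types of diagram up to stable homeomorphism, so each $3$-class is finite and can be enumerated explicitly. For each of the two diagrams I would generate its entire $3$-class and confirm that no member contains a monogon or a reducing bigon, so that neither diagram is reducible nor $3$-equivalent to a reducible diagram. I expect this finite-but-delicate enumeration to be the principal obstacle: one must generate the $3$-class completely, including the moves that reroute strands across the identified edges of the torus, before one may safely conclude that no reduction exists.
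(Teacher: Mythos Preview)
Your overall plan is sound, but the invariant you propose for separating the two $3$-classes does not work on this particular pair. You correctly observe that a $3$-move preserves the unordered pair of components at each crossing, so the matrix of pairwise crossing numbers $c_{ij}$ is indeed constant on a $3$-class. The problem is that, taken up to relabelling of components, this invariant cannot tell the two diagrams of Figure~\ref{fig:counterexample} apart: as you yourself say, they differ by interchanging the positions of two components, and an interchange of components is precisely a relabelling. The component-incidence multigraph is therefore the \emph{same} for both diagrams, so your step distinguishing the $3$-classes collapses. You flagged the relabelling issue as ``subtle'', but here it is fatal, not merely delicate.

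The paper's route sidesteps this by a direct and much shorter argument. It first shows the two diagrams are not isomorphic via a finer combinatorial observation: in each diagram there is a unique component meeting all the others, and exactly two of those meetings are single crossings; in one diagram those two crossings are adjacent along an edge, in the other they are not. It then computes each $3$-class explicitly on the canonical surface. There are no monogons or bigons, and each of the four triangular regions gives a $3$-move that returns a diagram isotopic to the one we started with. Hence each $3$-class is a singleton. This one calculation simultaneously shows that both diagrams are reduced \emph{and} that their $3$-classes are distinct (since the unique representatives are non-isomorphic). What you anticipated as the hard part---the $3$-class enumeration---is in fact nearly trivial here, and once it is done your separate invariant-based step becomes unnecessary.
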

\begin{proof}
To show that this is indeed a counter-example we must first check that
 the two diagrams are not isomorphic.
That is, we need to establish that the diagrams are actually
distinct. Secondly, we should check that the diagrams are homotopic. Thirdly,
we need to show that the two diagrams are reduced. 
Together these show that the two diagrams satisfy the conditions
 of Statement~\ref{stmt:irredMulti}. If the statement is true then the
 diagrams should be related by a sequence of $3$-moves. We will show
 that this is not the case by showing that the two diagrams are in
 different $3$-classes. 
\par
To check that there is not an isomorphism between the two diagrams we
consider the vertical component in each diagram. It is the only
component that intersects with all the other components in each
diagram. We now note that in each diagram there are exactly two
components with which the vertical component intersects only
once. We consider the two points of intersection between these three
components. Notice that in Figure~\ref{fig:counterexample}(a) these two
points are joined by an edge. 
This property is preserved under isomorphism. However, in
 Figure~\ref{fig:counterexample}(b) the two points are not joined by an
 edge. 
So there cannot be an isomorphism between the two diagrams and they are
indeed distinct.
\par
The sequence of diagrams in Figure~\ref{fig:homotopicproof} show how
these two diagrams are related by a sequence of homotopy moves. Thus the
two diagrams are homotopic. Note that in this sequence we have to use a
$2$-move to temporarily increase the number of crossings and later
another $2$-move to reduce them again.
\par
Both diagrams in Figure~\ref{fig:counterexample} are drawn on their
 canonical surfaces. In each diagram the 
curves divide the surface up into regions. For each region we can count
the number of edges. If there was a $1$-move available we would be able
to find a monogon. If there was a $2$-move available we would be able to
find a bigon. It is easy to check that there are no monogons or bigons
in either diagram. Thus both diagrams are irreducible. To verify that
they are both reduced we must now consider their
$3$-classes. 
\par 
\begin{figure}[hbt]
\begin{center}
\includegraphics{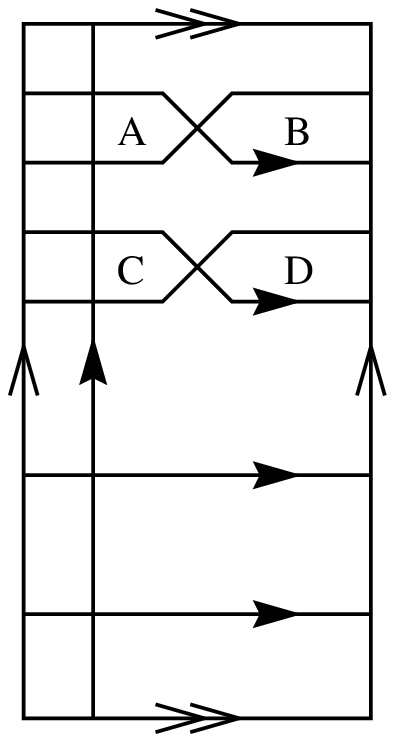}
\caption{The diagram from Figure~\ref{fig:counterexample}(a) with
 triangular regions labelled.}  
\label{fig:explain3class}
\end{center}
\end{figure}
To find possible $3$-moves on a diagram we can look for triangular
regions on the canonical surface. There must be a triangular region on
 the surface for each possible $3$-move. 
Figure~\ref{fig:explain3class} shows
the diagram in Figure~\ref{fig:counterexample}(a) with all the
 triangular regions labelled. 
There are four such regions which means there are four $3$-moves
 available. 
However, we note that if we make the $3$-move associated with any of the
 regions we get a diagram which is isotopic to the diagram we started
 with. Thus the $3$-class of the diagram in
 Figure~\ref{fig:counterexample}(a) just contains a single diagram.  
It is easy to check that the same thing is true for the diagram in
 Figure~\ref{fig:counterexample}(b).
\par
Thus both diagrams are reduced. As the diagrams are
not isomorphic, it is clear from the preceding paragraph that the two
diagrams are in different $3$-classes. Thus the two diagrams are indeed
a counter-example to Statement~\ref{stmt:irredMulti}.
\end{proof}
As yet, we have not found a counter-example for the single component
case. However, we note that if any pair of nanowords in
Table~\ref{tab:5groups} were shown to be homotopic, the pair would be a 
counter-example.
\par
As the proof in Kadokami's paper considers only the
general case of $n$ components and we have given a counter-example to
Statement~\ref{stmt:irredMulti}, it
is clear that there is a problem with the proof. 
It is still possible that Statement~\ref{stmt:irred} is true, but we can
not use it until it has been proved. We hope to consider this problem
further in the future.
\bibliography{mrabbrev,vietnam}
\bibliographystyle{hamsplain}
\end{document}